\newtheorem{Theorem}{Theorem}
\newtheorem{Corollary}{Corollary}
\begin{document}
\title{Co-Prime Order graph of a finite abelian Group and Dihedral Group}
\author{Amit Sehgal*$^{a}$, Manjeet$^{b}$,Dalip Singh$^{c}$ \\ 
$^{a,b}$Department of Mathematics, \\
Pt. NRS Govt. College,Rohtak (Haryana), India  \\ 
$^{c}$Department of Mathematics, \\
Maharshi Dayanand University,Rohtak (Haryana), India  \\ 
$^a$amit\_sehgal\_iit@yahoo.com, $^b$sainimanjeet1994@gmail.com\\
$^c$dsmdur@gmail.com  
}

\maketitle
\begin{abstract}
The \textbf{Co-Prime Order Graph} $\Theta (G)$ of a given finite group is a simple undirected graph whose vertex set is the group $G$ itself, and  any two vertexes x,y in $\Theta (G)$ are adjacent if and only if $gcd(o(x),o(y))=1$ or prime. In this paper, we find a precise formula to count the degree of a vertex in the Co-Prime Order graph of a finite abelian group or Dihedral group $D_n$.We also investigate the Laplacian spectrum of the Co-Prime Order Graph $\Theta (G)$ when G is finite abelian p-group, ${\mathbb{Z}_p}^t \times {\mathbb{Z}_q}^s$ or Dihedral group $D_{p^n}$.

{\bf AMS Subject Classification:} 05C25, 05C50 

{\bf Key Words and Phrases:} Co-Prime Order graph,finite abelian group,Dihedral group, Laplacian spectrum. 

\end{abstract}

\section{Introduction}
It is common to generate graphs from groups. In \cite{Zel75}, the author studied $Intersection~Graphs$ defined on a finite Abelian Group.The $Cayley~digraph$ is also an important class of directed graphs defined on finite groups and readers may refer to \cite{Bud85}. Kelarev and Quinn \cite{kela00} introduced the directed power graph of a group as a directed graph whose vertex set is the group, and there is an arc from vertex $u$ to the other vertex $v$ whenever $v$ is a power of $u$. Motivated by this concept, Chakrabarty et al. \cite{chak09} defined the power graph $\mathcal{G}(G)$ of a group $G$ as a graph with $G$ as its vertex set, and there is an edge between two distinct vertices if one is a power of the other. Subarsha Banerjee \cite{Sub19} introduced the Co-Prime Order Graph of a group as a simple graph whose vertex set is the group, and there is an arc from vertex $u$ to the other vertex $v$ whenever $gcd(o(u),o(v))=1$ or prime. The degree of a vertex of a graph associated with a finite group is also used to study the structural properties of the graph \cite{Seh19}. In Section 3 of the present paper, we obtain the degree of a vertex in the Co-Prime Order graph of a finite abelian group or Dihedral group $D_n$. In Section 4 of the present paper, we obtain the Laplacian spectrum of the Co-Prime Order Graph $\Theta (G)$ when G is finite abelian p-group, ${\mathbb{Z}_p}^t \times {\mathbb{Z}_q}^s$ or Dihedral group $D_{p^n}$. 

\section{Preliminaries and Notation}
Let $\Gamma$ be a finite simple graph. Its Laplacian  matrix is the matrix $L(\Gamma) =D(\Gamma)~-~ A(\Gamma)$, where $D(\Gamma)$ is the diagonal matrix of vertex degrees of $\Gamma$ and $A(\Gamma)$is the adjacency matrix of $\Gamma$.The Laplacian polynomial of $\Gamma$ is the characteristic polynomial of $L(\Gamma)$

Let $G$ be a group. Let $e$ denote the identity element of $G$ and $|G|$ denote the order of $G$ used throughout the paper.  The cyclic group of order $n$ is usually denoted by $\mathbb{Z}_n$. Let $g$ be an arbitrary element of $G$. We denote the order of $g$ by $|g|$. Let $H$ and $K$ be two normal subgroups of $G$. 
\section{Results for Degree}

\begin{Theorem}
Let $G=G_1 \times G_2 \times \ldots \times G_r$ be a finite abelian group where $G_i \thickapprox \mathbb{Z}_{{p_i}^{{i_1}}} \times \mathbb{Z}_{{p_i}^{{i_2}}} \times \ldots \times \mathbb{Z}_{{p_i}^{{i_{ n_i}}}}$ where $p_i$ are distinct prime when $1 \leq i \leq r$, then we get following results for degree of various vertices in co-prime order graph of group G\\
(i) $\deg(x)=\prod_{k=1}^{r} {p_k}^{\sum_{j=1}^{{n_k}} k_j}-1$ when $o(x)=1 ~or~p_i$ where $i=1,2,\ldots,r$\\
(ii) $\deg(x)=(-k-l+1+\sum_{j=1}^{j=k} |{G_{{\alpha_j}}}|+\sum_{j=1}^{j=l} {({p_{\beta_j}})^{n_{\beta_j}}})\frac{|G|}{(\prod_{i=1}^{i=k}|G_{{\alpha_i}}|) (\prod_{i=1}^{i=l} |G_{{\beta_i}}|)}$ when $o(x)=(\prod_{i=1}^{i=k} p_{\alpha_i}) (\prod_{i=1}^{i=l} {p_{\beta_i}}^{\gamma_i})$ where $2\leq k+l \leq r$ or $l \geq 1$ and $p_{\alpha_1}, p_{\alpha_2},\ldots, p_{\alpha_k},p_{\beta_1},p_{\beta_2},\ldots,p_{\beta_l}$ are distinct primes from set $\{p_1,p_2,\ldots,p_r\}$ and $\gamma_1,\gamma_2,\ldots,\gamma_l \geq 2$.
\end{Theorem}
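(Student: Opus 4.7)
Part (i) is immediate: if $o(x)=1$ or $o(x)=p_i$, then $\gcd(o(x),o(y))$ divides $o(x)$ and is therefore $1$ or $p_i$, so $x$ is adjacent to every other vertex of $\Theta(G)$ and $\deg(x)=|G|-1=\prod_{k=1}^{r}p_k^{\sum_{j=1}^{n_k}k_j}-1$. For Part (ii), I will use the primary decomposition $G=G_1\times\cdots\times G_r$, writing each $y\in G$ as a tuple $y=(y_1,\ldots,y_r)$ with $y_i\in G_i$ so that $o(y)=\lcm(o(y_1),\ldots,o(y_r))$; in particular the prime divisors of $o(y)$ are exactly those $p_i$ for which $y_i\ne e$. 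The $\alpha$-primes $p_{\alpha_1},\ldots,p_{\alpha_k}$ are those dividing $o(x)$ to exactly the first power, the $\beta$-primes $p_{\beta_1},\ldots,p_{\beta_l}$ are those dividing $o(x)$ to a power $\gamma_i\ge 2$, and I set $N=|G|/\bigl((\prod_i|G_{\alpha_i}|)(\prod_i|G_{\beta_i}|)\bigr)$, which is the order of the Hall subgroup supported on the primes outside $\{p_{\alpha_i}\}\cup\{p_{\beta_i}\}$.

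Since adjacency requires $\gcd(o(x),o(y))$ to be $1$ or a prime, this gcd must lie in $\{1,p_{\alpha_1},\ldots,p_{\alpha_k},p_{\beta_1},\ldots,p_{\beta_l}\}$, giving $1+k+l$ disjoint classes of neighbors to enumerate. I would count them as follows. (a) $\gcd=1$: this forces $y_{\alpha_i}=y_{\beta_j}=e$ for every $i,j$ while the remaining coordinates are free, giving $N$ elements. (b) $\gcd=p_{\alpha_i}$ for a fixed $i$: here $y_{\alpha_i}\ne e$ (any nontrivial element of $G_{\alpha_i}$ has $p_{\alpha_i}$-power order, and since $o(x)$ contains $p_{\alpha_i}$ only to the first power, the gcd at this prime is automatically $p_{\alpha_i}$), all other $\alpha/\beta$-coordinates are trivial, and the remaining coordinates are free, contributing $(|G_{\alpha_i}|-1)N$. (c) $\gcd=p_{\beta_i}$: because $o(x)$ already contains $p_{\beta_i}^{\gamma_i}$ with $\gamma_i\ge 2$, $y_{\beta_i}$ must have order exactly $p_{\beta_i}$, not higher. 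For this last case I invoke the standard fact that in $G_{\beta_i}\cong\prod_{j=1}^{n_{\beta_i}}\mathbb{Z}_{p_{\beta_i}^{i_j}}$ the elements of order dividing $p_{\beta_i}$ form a subgroup isomorphic to $(\mathbb{Z}_{p_{\beta_i}})^{n_{\beta_i}}$, giving $p_{\beta_i}^{n_{\beta_i}}-1$ elements of order exactly $p_{\beta_i}$, hence a contribution of $(p_{\beta_i}^{n_{\beta_i}}-1)N$.

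Summing the three contributions I obtain
\[
\deg(x)=N\Bigl[1+\sum_{i=1}^{k}(|G_{\alpha_i}|-1)+\sum_{i=1}^{l}(p_{\beta_i}^{n_{\beta_i}}-1)\Bigr],
\]
and expanding $N$ while collecting the $-1$'s rearranges this to the displayed formula. As a quick sanity check, the hypothesis ($k+l\ge 2$, or $l\ge 1$ with $\gamma_1\ge 2$) ensures that $o(x)$ is neither $1$ nor prime, so $\gcd(o(x),o(x))=o(x)$ does not satisfy the adjacency condition and $x$ is not accidentally counted among its own neighbors; no correction is needed. The main obstacle is not conceptual but organizational: carefully keeping the indexing of $\alpha$-primes, $\beta$-primes, and the remaining primes straight, and verifying that the $1+k+l$ classes are both disjoint and exhaustive among the neighbors of $x$. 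The only substantive input beyond this bookkeeping is the count of order-$p$ elements in a finite abelian $p$-group.
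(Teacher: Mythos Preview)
Your proposal is correct and follows essentially the same approach as the paper: both proofs use the primary decomposition $y=(y_1,\ldots,y_r)$ and count neighbors according to which single prime (if any) among the $p_{\alpha_i}$'s and $p_{\beta_i}$'s survives in $\gcd(o(x),o(y))$, invoking the fact that a finite abelian $p$-group of rank $n$ has exactly $p^n-1$ elements of order $p$. The only cosmetic difference is that the paper phrases the cases as ``$o(y)$ divides $|G_{\alpha_j}|\cdot N$'' or ``$o(y)$ divides $p_{\beta_s}\cdot N$'' and then removes the overlap $N$ by inclusion--exclusion, whereas you partition the neighbor set directly by the value of the gcd; the resulting term-by-term counts $(|G_{\alpha_i}|-1)N$, $(p_{\beta_i}^{n_{\beta_i}}-1)N$, and $N$ are identical.
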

\begin{proof}
Let $x$ be an arbitrary element group G of order $\prod_{i=1}^{r} {p_i}^{\alpha_i}$, then there exist unique $x_{p_i} \in G_i$ of order ${p_i}^{\alpha_i}$ such that $x=\prod_{i=1}^{r} x_{p_i}$.
We also know that group $G_i$ has exactly ${p_i}^{n_i}-1$ elements of order $p_i$.\\
We consider following case as follows:\\
{\bf{Case 1:}} Let $o(x)=1~or~p_i$\\
We know that $gcd(o(x),o(y))=1~or~p_i$ for every $y \in G$,so vertex $x$ is connected all the vertices,hence $\deg(x)=\prod_{k=1}^{r} {p_k}^{\sum_{j=1}^{{n_k}} k_j}-1$.\\

{\bf {Case 2:}} Let $o(x)=(\prod_{i=1}^{i=k} p_{\alpha_i}) (\prod_{i=1}^{i=l} {p_{\beta_i}}^{\gamma_i})$ where $2\leq k+l \leq r$ or $l \geq 1$ and $p_{\alpha_1}, p_{\alpha_2},\ldots, p_{\alpha_k},p_{\beta_1},p_{\beta_2},\ldots,p_{\beta_l}$ are distinct primes from set $\{p_1,p_2,\ldots,p_r\}$ and $\gamma_1,\gamma_2,\ldots,\gamma_l \geq 2$.\\
We have $((\prod_{i=1}^{i=k} p_{\alpha_i}) (\prod_{i=1}^{i=l} {p_{\beta_i}}^{\gamma_i}),\frac{|G|}{(\prod_{i=1}^{i=k}|G_{{\alpha_i}}|) (\prod_{i=1}^{i=l} G_{{\beta_i}})})=1$,then\\
$((\prod_{i=1}^{i=k} p_{\alpha_i}) (\prod_{i=1}^{i=l} {p_{\beta_i}}^{\gamma_i}),\frac{|{G_{{\alpha_j}}}||G|}{(\prod_{i=1}^{i=k}|G_{{\alpha_i}}|) (\prod_{i=1}^{i=l} G_{{\beta_i}})})=p_{\alpha_j}$ where $1 \leq j \leq k$ and $((\prod_{i=1}^{i=k} p_{\alpha_i}) (\prod_{i=1}^{i=l} {p_{\beta_i}}^{\gamma_i}) ,\frac{{{p_{\beta_s}}}|G|}{(\prod_{i=1}^{i=k}|G_{{\alpha_i}}|) (\prod_{i=1}^{i=l} |G_{{\beta_i}}|)})=p_{\beta_s}$ where $1 \leq s \leq l$.

So, vertex $x$ is adjacent to every vertex whose order divides $\frac{|{G_{{\alpha_j}}}||G|}{(\prod_{i=1}^{i=k}|G_{{\alpha_i}}|) (\prod_{i=1}^{i=l} |G_{{\beta_i}}|)}$ or $\frac{{{p_{\beta_s}}}|G|}{(\prod_{i=1}^{i=k}|G_{{\alpha_i}}|) (\prod_{i=1}^{i=l} G_{{\beta_i}})})$  for every $1 \leq j \leq k$ and $1 \leq s \leq l$.

So, vertex $x$ is adjacent to $\sum_{j=1}^{j=k} (\frac{|{G_{{\alpha_j}}}||G|}{(\prod_{i=1}^{i=k}|G_{{\alpha_i}}|) (\prod_{i=1}^{i=l} |G_{{\beta_i}}|)}-\frac{|G|}{(\prod_{i=1}^{i=k}|G_{{\alpha_i}}|) (\prod_{i=1}^{i=l} |G_{{\beta_i}}|)})+\sum_{j=1}^{j=l} \frac{{({p_{\beta_j}})^{n_{\beta_j}}}|G|}{(\prod_{i=1}^{i=k}|G_{{\alpha_i}}|) (\prod_{i=1}^{i=l} |G_{{\beta_i}}|)})-\frac{|G|}{(\prod_{i=1}^{i=k}|G_{{\alpha_i}}|) (\prod_{i=1}^{i=l} |G_{{\beta_i}}|)})+\frac{|G|}{(\prod_{i=1}^{i=k}|G_{{\alpha_i}}|) (\prod_{i=1}^{i=l} |G_{{\beta_i}}|)}=(-k-l+1+\sum_{j=1}^{j=k} |{G_{{\alpha_j}}}|+\sum_{j=1}^{j=l} {({p_{\beta_j}})^{n_{\beta_j}}})\frac{|G|}{(\prod_{i=1}^{i=k}|G_{{\alpha_i}}|) (\prod_{i=1}^{i=l} |G_{{\beta_i}}|)} $\\  
Hence we get desired result.  
\end{proof}
\begin{Corollary}
Let $D_n=\{f^i r^j|o(f)=2,o(r)=n,rf=fr^{-1}\}$ be a finite non-abelian group with $2n$, then we get following results for degree of various vertices in co-prime order graph of group $D_n$\\
(i) If $o(x) \neq 2$,then difference of degree's of x in Co-prime prime order graph of group $D_n$ and group $\langle r \rangle$ is n.\\ 
(ii) If $o(x)=2$, then $deg(x)=2n-1$.
\end{Corollary}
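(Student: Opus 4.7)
My plan is to exploit the structural decomposition $D_n = \langle r \rangle \cup \{fr^j : 0 \le j \le n-1\}$ together with the fact that every reflection $fr^j$ has order exactly $2$. Since the adjacency rule in $\Theta(D_n)$ depends only on the orders of the two endpoints, any element of order $2$ is, from the adjacency viewpoint, interchangeable with every other element of order $2$.

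For part (ii), I would let $x$ be an arbitrary element of order $2$ and observe that for any other $y \in D_n$ we have $\gcd(o(x), o(y)) = \gcd(2, o(y)) \in \{1, 2\}$, which is either $1$ or prime. Hence $x$ is adjacent to every other vertex of $\Theta(D_n)$, giving $\deg(x) = |D_n| - 1 = 2n-1$.

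For part (i), since $o(x) \neq 2$ and every reflection has order $2$, $x$ is forced to lie in $\langle r \rangle$. I would then partition the neighbourhood $N_{\Theta(D_n)}(x)$ into its rotation part and its reflection part. On the rotation side, both $x$ and any candidate neighbour lie in $\langle r \rangle$ and the adjacency criterion is literally the one defining $\Theta(\langle r \rangle)$, so this part contributes exactly $\deg_{\Theta(\langle r \rangle)}(x)$ neighbours. On the reflection side, each of the $n$ reflections has order $2$, and as in part (ii) the gcd with $o(x)$ is automatically $1$ or prime, so $x$ is adjacent to all $n$ of them. Summing the two contributions yields $\deg_{\Theta(D_n)}(x) - \deg_{\Theta(\langle r \rangle)}(x) = n$.

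I do not expect a serious obstacle here; the entire argument rests on the elementary observation that $\gcd(2, m) \in \{1, 2\}$ for every positive integer $m$, which makes every order-$2$ element universally adjacent. The only bookkeeping point to keep track of is that the graph is simple, so $x$ is not to be counted as its own neighbour; this is automatically respected by handling the rotation part via $\Theta(\langle r \rangle)$ and the reflection part separately.
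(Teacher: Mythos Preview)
Your proposal is correct and follows essentially the same approach as the paper: both arguments split $D_n$ into the rotation subgroup $\langle r\rangle$ and the $n$ reflections of order $2$, use the observation that $\gcd(2,m)\in\{1,2\}$ to make every order-$2$ element universally adjacent, and then read off the two claimed degree formulas. Your write-up is in fact slightly more explicit than the paper's (you justify why $o(x)\neq 2$ forces $x\in\langle r\rangle$ and spell out the $\gcd$ reasoning), but the underlying argument is identical.
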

\begin{proof}
Take $x$ be arbitrary element of $D_n$.\\
Case 1:- If $o(x) \neq 2$, then $x$ is connected with following vertices:- \\
(i) All vertices of the type $fr^j$ where $j=1,2,\ldots,n$. These vertices are $n$ vertices.\\
(ii) Vertices of the type $r^j$ if $(o(x),\frac{n}{(n,j)})=1$ or prime where $j=1,2,\ldots,n$. Number of these vertices are same as degree of any vertex of order equal to o(x) in co-prime order graph of group ${\mathbb{Z}_{n}}$ \\  
Hence, we get difference of degree's of x in Co-prime prime order graph of group $D_n$ and group $\langle r \rangle$ is n.\\ 
Case 2:- If $o(x)=2$, then x is connected with every vertex other than itself, so we get $deg(x)=2n-1$.  
\end{proof}
\section{Laplacian spectrum}
\begin{Theorem}\label{thm1}
Let \[
   L=
  \left( {\begin{array}{cc}
   A & C \\
   C^T & B \\
   
  \end{array} } \right)
\]
where 
\[
   A=
  \left( {\begin{array}{ccccc}
   p+q-1 & -1 & -1 & \ldots & -1\\
   -1 & p+q-1 & -1 & \ldots & -1 \\
   -1 & -1 & p+q-1 & \ldots & -1 \\
   \ldots & \ldots & \ldots &\ldots & \ldots \\
   -1 & -1 & -1 & \ldots & p+q-1\\
  \end{array} } \right)_{p \times p}
\]

$C$ is a $p \times q $ matrix whose all entries are -1 and $B=p I_{q \times q}$. Prove that Laplacian spectrum of $L$ where $p~ and ~q \geq 1$ is $0$ with multiplicity $1,p+q$ with multiplicity $p$ and $p$ with multiplicity $q-1$
\end{Theorem}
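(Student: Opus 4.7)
The plan is to diagonalize $L$ by guessing structured eigenvectors, using the fact that $A=(p+q)I_p-J_p$ and $C=-J_{p,q}$, where $J$ denotes an all-ones matrix of appropriate dimensions. First I would verify that $L$ is symmetric and that every row sum is zero (the $A$-row gives $(p+q-1)-(p-1)-q=0$ and the $B$-row gives $-p+p=0$); this immediately produces the eigenvalue $0$ with eigenvector $\mathbf{1}_{p+q}$.

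Next I would exploit the block structure. For any vector $v\in\mathbb{R}^p$ orthogonal to $\mathbf{1}_p$, the vector $\binom{v}{0}$ satisfies $Av=(p+q)v-J_pv=(p+q)v$ and $C^Tv=-J_{q,p}v=0$, giving the eigenvalue $p+q$. Since the orthogonal complement of $\mathbf{1}_p$ in $\mathbb{R}^p$ has dimension $p-1$, this accounts for multiplicity at least $p-1$. Similarly, for any $w\in\mathbb{R}^q$ orthogonal to $\mathbf{1}_q$, the vector $\binom{0}{w}$ satisfies $Cw=0$ and $Bw=pw$, so $p$ is an eigenvalue with multiplicity at least $q-1$.

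This yields $1+(p-1)+(q-1)=p+q-1$ eigenvalues, so exactly one eigenvector remains, and by orthogonality it must lie in the two-dimensional subspace spanned by $\binom{\mathbf{1}_p}{0}$ and $\binom{0}{\mathbf{1}_q}$. I would restrict $L$ to this subspace: for a trial vector $\binom{a\mathbf{1}_p}{b\mathbf{1}_q}$, direct computation gives
\[
L\begin{pmatrix}a\mathbf{1}_p\\ b\mathbf{1}_q\end{pmatrix}=\begin{pmatrix}q(a-b)\mathbf{1}_p\\ p(b-a)\mathbf{1}_q\end{pmatrix},
\]
which reduces to the $2\times 2$ matrix $\left(\begin{smallmatrix}q & -q\\ -p & p\end{smallmatrix}\right)$. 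Its eigenvalues are readily found to be $0$ (for $a=b$, matching the known null vector) and $p+q$ (for $pa+qb=0$), so the surviving eigenvector contributes one more copy of $p+q$.

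Assembling the multiplicities gives $0$ once, $p+q$ with multiplicity $(p-1)+1=p$, and $p$ with multiplicity $q-1$, totalling $p+q$, which matches the size of $L$ and completes the spectrum. The argument is essentially routine linear algebra, so there is no real obstacle; the only care needed is in the final step, where one must confirm that the two-dimensional invariant subspace contributes exactly one new eigenvalue (the other being $0$, which is already counted) to avoid double-counting.
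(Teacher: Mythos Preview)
Your argument is correct and is a genuinely different route from the paper's. The paper computes the characteristic polynomial of $L$ directly by a sequence of elementary row operations: summing all rows into the first to extract the factor $x$, then subtracting the first row from the others to reduce to a block-triangular determinant, and repeating the same trick on the remaining $q\times q$ block to obtain $\det(xI-L)=x(x-p-q)^{p}(x-p)^{q-1}$. You instead exploit the structural identity $L=(p+q)I_p\oplus pI_q - J'$ (with $J'$ the appropriate rank-one-per-block pattern) to write down explicit eigenvectors, decomposing $\mathbb{R}^{p+q}$ into the three $L$-invariant pieces $(\mathbf{1}_p^{\perp},0)$, $(0,\mathbf{1}_q^{\perp})$, and $\operatorname{span}\{\binom{\mathbf{1}_p}{0},\binom{0}{\mathbf{1}_q}\}$. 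Your approach is more conceptual and yields the eigenvectors for free, which the determinant computation does not; the paper's approach is more mechanical and avoids any appeal to symmetry or invariant subspaces. Both are short, but yours generalizes more transparently to other Laplacians built from $I$ and all-ones blocks.
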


\begin{proof}
We now proceed to find the spectrum of L for every prime $p$\\
The characteristic polynomial of L is given by\\
\[ \Lambda=det(xI-L)= det 
   \left( {\begin{array}{cc}
   xI-A & -C \\
   -C^T & xI-B \\
  \end{array} } \right)  
\]
Apply the row operation $R_{1} \to \sum_{i=1}^{p+q} R_i$ and take $x$ common from $1st$ row, we get\\ 
\[ \Lambda=det(xI-L)= x~det 
   \left( {\begin{array}{cc}
   D & F \\
   -C^T & xI-B \\
  \end{array} } \right)  
\]
where 
\[
   D=
  \left( {\begin{array}{cccc}
    1 & 1  & \ldots & 1\\
   1 & x-p-q+1 & \ldots & 1 \\
  
   \ldots & \ldots  &\ldots & \ldots \\
   1 & 1 &  \ldots & x-p-q+1\\
  \end{array} } \right)_{p \times p}
  F=
  \left( {\begin{array}{cccc}
    1 & 1  & \ldots & 1\\
   1 &  1  & \ldots & 1 \\
   
   \ldots & \ldots  &\ldots & \ldots \\
   1 & 1 &  \ldots & 1\\
  \end{array} } \right)_{p \times q}
\]

Now we apply row operations $R_{i} \to R_{i}-R_{1} \forall~i=2,3,\ldots,p+q$, we get\\
\[ \Lambda=det(xI-L)= x~det 
   \left( {\begin{array}{cc}
    G & H \\
    I & J \\
  \end{array} } \right)  
\]
where 
\[
   G=
  \left( {\begin{array}{cccc}
    1 & 1  & \ldots & 1\\
    0 & x-p-q &  \ldots & 0 \\
   \ldots & \ldots  &\ldots & \ldots \\
   0 & 0  & \ldots & p-q\\
  \end{array} } \right)_{p \times p}
  F=
  \left( {\begin{array}{ccccc}
    1 & 1  & \ldots & 1\\
   0 &  0  & \ldots & 0 \\
  
   \ldots &  \ldots &\ldots & \ldots \\
   0 &  0  & \ldots & 0 \\
  \end{array} } \right)_{p \times q}
\]  
\[
  I=
  \left( {\begin{array}{cccc}
   0 &  0  & \ldots & 0 \\
   0  & 0 & \ldots & 0 \\
   \ldots & \ldots  &\ldots & \ldots \\
   0 &  0  & \ldots & 0 \\
  \end{array} } \right)_{q \times p}
  J=
  \left( {\begin{array}{cccc}
    x-p-1 & -1  & \ldots & -1\\
    -1 & x-p-1  & \ldots & -1 \\
   \ldots & \ldots  &\ldots & \ldots \\
   -1 & -1 & \ldots & x-p-1\\
  \end{array} } \right)_{q \times q}
\]

Now we rewrite 
\[ \Lambda=det(xI-L)=x(x-p-q)^{p-1} det(J) \]

Apply the row operation $R_{1} \to \sum_{i=1}^{q} R_i$ to $det(J)$ and take $x-p-q$ common from $1st$ row, we get\\
\[ \Lambda=det(xI-L)=x(x-p-q)^{p} det(K) \]
where 
\[  K=
  \left( {\begin{array}{cccc}
    1 & 1  & \ldots & 1\\
    -1 & x-p-1  & \ldots & -1 \\
   \ldots & \ldots &\ldots & \ldots \\
   -1 & -1 & \ldots & x-p-1\\
  \end{array} } \right)_{q \times q}  
\]
Now we apply row operations $R_{i} \to R_{i}+R_{1} \forall~i=2,3,\ldots,q$, we get\\
\[ \Lambda=det(xI-L)=x(x-p-q)^p det(K_1) \]
where 
\[  K_1=
  \left( {\begin{array}{cccc}
    1 & 1  & \ldots & 1\\
    0 & x-p  & \ldots & 0 \\
   \ldots & \ldots  &\ldots & \ldots \\
   0 & 0 & \ldots & x-p\\
  \end{array} } \right)_{q \times q}  
\]

Finally, we get $\Lambda=det(xI-L)=x(x-p-q)^{p}(x-p)^{q-1}$.\\
Hence the eigenvalues of $L$ are $0$ with multiplicity $1$, $p+q$ with multiplicity $p$ and $p$ with multiplicity $q-1$.
\end{proof}

\begin{Theorem}
Prove that Laplacian spectrum of group ${\mathbb{Z}^{t}_{p}}$ where $p$ is prime and $t \geq 1$ are $0$ with multiplicity $1$ and  $p^t$ with multiplicity $p^t-1$. 
\end{Theorem}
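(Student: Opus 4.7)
The plan is to observe that $\Theta(\mathbb{Z}_p^t)$ is simply the complete graph on $p^t$ vertices, and then invoke the standard Laplacian spectrum of $K_n$. Indeed, $\mathbb{Z}_p^t$ is an elementary abelian $p$-group, so every non-identity element has order exactly $p$. Hence for any two distinct vertices $x,y$, the pair $(o(x),o(y))$ lies in $\{(1,p),(p,1),(p,p)\}$, and in each case $\gcd(o(x),o(y)) \in \{1,p\}$, i.e., is either $1$ or the prime $p$. Every pair of distinct vertices is therefore adjacent, so $\Theta(\mathbb{Z}_p^t) \cong K_{p^t}$. This can also be read directly from Case 1 of Theorem 1 applied with $r=1$, $p_1=p$, $n_1=t$, which states that every vertex of order $1$ or $p_i$ is adjacent to all others.

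Next I would compute the Laplacian spectrum of $K_n$ with $n=p^t$. Writing $L(K_n)=nI_n-J_n$, where $J_n$ is the all-ones matrix, the spectrum is read off from that of $J_n$: the eigenvector $\mathbf{1}$ gives the eigenvalue $0$ of $L$, and the $(n-1)$-dimensional orthogonal complement consists of eigenvectors of $L$ with eigenvalue $n$. Substituting $n=p^t$ yields eigenvalue $0$ with multiplicity $1$ and $p^t$ with multiplicity $p^t-1$, as claimed.

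As an alternative route, one can specialize Theorem \ref{thm1} by taking its parameters to be $p := p^t-1$ and $q := 1$; the resulting matrix is exactly $L(K_{p^t})$, and the characteristic polynomial $x(x-(p+q))^{p}(x-p)^{q-1}$ collapses to $x(x-p^t)^{p^t-1}$ since the last factor has exponent $0$. Either way, the only real step is identifying the graph as complete; the spectral calculation is then routine, so there is no substantive obstacle in this proof.
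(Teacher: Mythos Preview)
Your proof is correct. The paper proceeds in essentially the same spirit: it writes down the Laplacian explicitly as the $p^t\times p^t$ matrix with $p^t-1$ on the diagonal and $-1$ elsewhere, and then computes the characteristic polynomial by the standard row operations (add all rows, factor out $x$, subtract the first row) to obtain $x(x-p^t)^{p^t-1}$. The only difference is packaging: you name the graph as $K_{p^t}$ and read off the spectrum from $L(K_n)=nI_n-J_n$, while the paper carries out the determinant calculation by hand without invoking that identification. Your alternative specialization of Theorem~\ref{thm1} with parameters $p^t-1$ and $1$ is also valid and gives the same polynomial, with the third factor collapsing as you note.
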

\begin{proof}
We now proceed to find the Laplacian Spectrum of group and denoted by $L$.\\
The rows and columns of the matrix $L$ have been indexed in the following ways:\\
We start with the zero element $[0]$ of ${\mathbb{Z}^{t}_{p}}$.\\
We then list the remaining elements of ${\mathbb{Z}^{t}_{p}}$.\\
Using the above indexing the matrix of $L$ takes the following form:\\
\[
   L=
  \left( {\begin{array}{ccccc}
   p^t-1 & -1 & -1 & \ldots & -1\\
   -1 & p^t-1 & -1 & \ldots & -1 \\
   -1 & -1 & p^t-1 & \ldots & -1 \\
   \ldots & \ldots & \ldots &\ldots & \ldots \\
   -1 & -1 & -1 & \ldots & p^t-1\\
  \end{array} } \right)_{p^t \times p^t}
\]
We now proceed to find the spectrum of $L$ for every prime $p$.The characteristic polynomial of $L$ is given by $\Lambda(x)=det(xI-L)$.\\
Apply the row operation $R_{1} \to \sum_{i=1}^{p^t} R_i$ and take $x$ common from $1st$ row, we get\\  
\[
   \Lambda(x)=x~det
  \left( {\begin{array}{ccccc}
   1 & 1 & 1 & \ldots & 1\\
   1 & x-p^t+1 & 1 & \ldots & 1 \\
   1 & 1 & x-p^t+1 & \ldots & 1 \\
   \ldots & \ldots & \ldots &\ldots & \ldots \\
   1 & 1 & 1 & \ldots & x-p^t+1\\
  \end{array} } \right)_{p^t \times p^t}
\]
Now we apply row operations $R_{i} \to R_{i}-R_{1} \forall~i=2,3,\ldots,p^t$, we get\\
\[
   \Lambda(x)=x~det
  \left( {\begin{array}{ccccc}
   1 & 1 & 1 & \ldots & 1\\
   0 & x-p^t & 0 & \ldots & 0 \\
   0 & 0 & x-p^t & \ldots & 0 \\
   \ldots & \ldots & \ldots &\ldots & \ldots \\
   0 & 0 & 0 & \ldots & x-p^t\\
  \end{array} } \right)_{p^t \times p^t}
\]
Thus  we have, $\Lambda(x)=x(x-p^t)^{p^t-1}$\\
Hence the eigenvalues of L are 0 with multiplicity 1 and $p^t$ with multiplicity $p^t-1$.
\end{proof}

\begin{Theorem}
Prove that Laplacian spectrum of group $\mathbb{Z}_{{p}^{\alpha_1}} \times \mathbb{Z}_{{p}^{\alpha_2}} \times  \ldots \times \mathbb{Z}_{{p}^{\alpha_n}}$ where $n \geq 1$ are $0$ with multiplicity $1$, $p^{\alpha_1+\alpha_2+\ldots+\alpha_n}$ with multiplicity $p^n$ and $p^n$ with multiplicity $p^{\alpha_1+\alpha_2+\ldots+\alpha_n}-p^n-1$ where $p$ is a prime and $ \max{(\alpha_1,\alpha_2, \ldots ,\alpha_n)}\geq 2$
\end{Theorem}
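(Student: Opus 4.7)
The plan is to reduce this statement to a direct application of Theorem \ref{thm1} by identifying the structure of the co-prime order graph of a finite abelian $p$-group.

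First I would observe that since $G=\mathbb{Z}_{p^{\alpha_1}}\times\cdots\times\mathbb{Z}_{p^{\alpha_n}}$ is a $p$-group, every element has order a power of $p$, say $p^a$. For two elements of orders $p^a$ and $p^b$ we have $\gcd(p^a,p^b)=p^{\min(a,b)}$, which is $1$ or prime precisely when $\min(a,b)\le 1$. Hence two distinct vertices $x,y$ are adjacent in $\Theta(G)$ if and only if at least one of them has order $1$ or $p$. Let $A=\{g\in G: o(g)\in\{1,p\}\}$; this is exactly the $p$-torsion subgroup $\Omega_1(G)\cong\mathbb{Z}_p^{n}$, so $|A|=p^n$. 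Let $B=G\setminus A$, so $|B|=p^{\alpha_1+\cdots+\alpha_n}-p^n$, which is positive precisely because $\max(\alpha_i)\ge 2$.

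The adjacency rule then gives an explicit description: $A$ induces a complete subgraph on $p^n$ vertices, $B$ is an independent set, and every vertex of $A$ is joined to every vertex of $B$. Equivalently $\Theta(G)=K_{p^n}\vee\overline{K_{\,|B|}}$. Therefore each vertex in $A$ has degree $|G|-1=p^{\alpha_1+\cdots+\alpha_n}-1$ and each vertex in $B$ has degree $p^n$.

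Next I would order the vertex set by listing the elements of $A$ first and those of $B$ afterwards. Writing out the Laplacian $L=D-\mathrm{Adj}$ in block form with these indices gives exactly the $2\times 2$ block matrix appearing in Theorem \ref{thm1}, with the parameters renamed $p\mapsto p^n$ and $q\mapsto p^{\alpha_1+\cdots+\alpha_n}-p^n$: the top-left $|A|\times|A|$ block has diagonal entries $p^n+q-1 = |G|-1$ and off-diagonal entries $-1$, the bottom-right $|B|\times|B|$ block equals $p^n\,I$, and every cross entry equals $-1$. Invoking Theorem \ref{thm1} then gives eigenvalues $0$ with multiplicity $1$, $p+q=p^{\alpha_1+\cdots+\alpha_n}$ with multiplicity $p^n$, and $p=p^n$ with multiplicity $q-1=p^{\alpha_1+\cdots+\alpha_n}-p^n-1$, which is exactly the desired spectrum.

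The only real content is step two, identifying the subgroup of elements of order dividing $p$ and recognising that this fixes the graph as a join of a clique and an independent set; once this is in place, the spectral computation is handled entirely by Theorem \ref{thm1}, so there is no serious obstacle. The hypothesis $\max(\alpha_i)\ge 2$ is used exactly to guarantee $q\ge 1$, as required by Theorem \ref{thm1}.
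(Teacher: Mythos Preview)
Your proposal is correct and follows essentially the same approach as the paper: partition the vertices into the $p^n$ elements of order at most $p$ and the remaining $p^{\alpha_1+\cdots+\alpha_n}-p^n$ elements, observe that the Laplacian then has exactly the block form of Theorem~\ref{thm1} with parameters $p\mapsto p^n$ and $q\mapsto p^{\alpha_1+\cdots+\alpha_n}-p^n$, and read off the spectrum. Your version is slightly more explicit in justifying the adjacency structure and in recognising the graph as a join $K_{p^n}\vee\overline{K_{|B|}}$, but the underlying argument is the same.
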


\begin{proof}
We now proceed to find the Laplacian Spectrum of group $\mathbb{Z}_{{p}^{\alpha_1}} \times \mathbb{Z}_{{p}^{\alpha_2}} \times  \ldots \times \mathbb{Z}_{{p}^{\alpha_n}}$ and denoted by $L$.\\
The rows and columns of the matrix $L$ have been indexed in the following ways:\\
We start with the zero element $[0]$ of $\mathbb{Z}_{{p}^{\alpha_1}} \times \mathbb{Z}_{{p}^{\alpha_2}} \times  \ldots \times \mathbb{Z}_{{p}^{\alpha_n}}$.\\
We then list the all $p^n-1$ elements of order $p$ from group $\mathbb{Z}_{{p}^{\alpha_1}} \times \mathbb{Z}_{{p}^{\alpha_2}} \times  \ldots \times \mathbb{Z}_{{p}^{\alpha_n}}$\\
We then list the remaining elements of $\mathbb{Z}_{{p}^{\alpha_1}} \times \mathbb{Z}_{{p}^{\alpha_2}} \times  \ldots \times \mathbb{Z}_{{p}^{\alpha_n}}$\\
Using the above indexing the matrix of $L$ takes the following form:\\
\[
   L=
  \left( {\begin{array}{cc}
   A & C \\
   C^T & B \\
   
  \end{array} } \right)
\]
where 
\[
   A=
  \left( {\begin{array}{ccccc}
   p^{\alpha_1+\alpha_2+\ldots+\alpha_n}-1 & -1 & -1 & \ldots & -1\\
   -1 & p^{\alpha_1+\alpha_2+\ldots+\alpha_n}-1 & -1 & \ldots & -1 \\
   -1 & -1 & p^{\alpha_1+\alpha_2+\ldots+\alpha_n}-1 & \ldots & -1 \\
   \ldots & \ldots & \ldots &\ldots & \ldots \\
   -1 & -1 & -1 & \ldots & p^{\alpha_1+\alpha_2+\ldots+\alpha_n}-1\\
  \end{array} } \right)_{p^n \times p^n}
\]
$C$ is a $p^n \times p^{\alpha_1+\alpha_2+\ldots+\alpha_n}-p^n $ matrix whose all entries are -1 and $B=p I_{p^{\alpha_1+\alpha_2+\ldots+\alpha_n}-p^n}$\\
By using theorem \ref{thm1}, we get Laplacian Spectrum of $L$ as follows:-\\ 
$0$ with multiplicity $1$, $p^{\alpha_1+\alpha_2+\ldots+\alpha_n}$ with multiplicity $p^n$ and $p^n$ with multiplicity $p^{\alpha_1+\alpha_2+\ldots+\alpha_n}-p^n-1$ 
\end{proof}

\begin{Theorem}
Prove that Laplacian spectrum of group ${\mathbb{Z}_p}^t \times {\mathbb{Z}_q}^s$ are $0$ with multiplicity $1$, $p^tq^s$ with multiplicity $p^t+q^s-1$ and $p^t+q^s-1$ with multiplicity $p^tq^s-p^t-q^s$ where $p$ and $q$ are distinct primes and $s,t \geq 1$.
\end{Theorem}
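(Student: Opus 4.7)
The plan is to reduce this to a direct application of Theorem \ref{thm1}. Since $p$ and $q$ are distinct primes, every element of $G=\mathbb{Z}_p^t\times\mathbb{Z}_q^s$ has order in $\{1,p,q,pq\}$. Counting via the component structure: there are $p^t-1$ elements of order $p$ (those with nontrivial first component and trivial second), $q^s-1$ elements of order $q$, and therefore $(p^t-1)(q^s-1)=p^tq^s-p^t-q^s+1$ elements of order $pq$.

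Next I analyze adjacency in $\Theta(G)$. A case-by-case computation of $\gcd(o(x),o(y))$ gives: any two vertices whose orders both lie in $\{1,p,q\}$ are adjacent (gcd is $1$, $p$, or $q$); any such vertex is adjacent to every vertex of order $pq$ (gcd is $p$ or $q$); but two vertices both of order $pq$ are non-adjacent, since $\gcd(pq,pq)=pq$, which is neither $1$ nor prime. Thus $\Theta(G)$ is obtained by taking a clique $P$ on the $p^t+q^s-1$ elements of order dividing $p$ or $q$, an independent set $Q$ on the $(p^t-1)(q^s-1)$ elements of order $pq$, and joining each vertex of $P$ to every vertex of $Q$.

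I now order the rows and columns of the Laplacian matrix $L$ by listing the vertices of $P$ first and the vertices of $Q$ second. Each vertex of $P$ has degree $|P|+|Q|-1=p^tq^s-1$, and each vertex of $Q$ has degree $|P|=p^t+q^s-1$. Hence $L$ takes the block form
\[
L=\left(\begin{array}{cc} A & C \\ C^T & B \end{array}\right),
\]
where $A$ is $(p^t+q^s-1)\times(p^t+q^s-1)$ with diagonal $p^tq^s-1$ and off-diagonal $-1$, where $C$ has every entry $-1$, and where $B=(p^t+q^s-1)\,I$. This is exactly the hypothesis of Theorem \ref{thm1} with the parameters of that theorem specialized to $p\mapsto p^t+q^s-1$ and $q\mapsto p^tq^s-p^t-q^s+1$.

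Applying Theorem \ref{thm1} with these parameters immediately yields eigenvalue $0$ with multiplicity $1$, eigenvalue $(p^t+q^s-1)+(p^tq^s-p^t-q^s+1)=p^tq^s$ with multiplicity $p^t+q^s-1$, and eigenvalue $p^t+q^s-1$ with multiplicity $(p^tq^s-p^t-q^s+1)-1=p^tq^s-p^t-q^s$, which is the claimed spectrum. The substantive step is the combinatorial identification of the split-graph structure; once the clique $P$ and independent set $Q$ are described, Theorem \ref{thm1} handles the eigenvalue computation mechanically. There is no serious obstacle, and the only thing to be careful about is keeping the two meanings of the letters $p,q$ distinct — the primes underlying $G$ versus the block dimensions used in the statement of Theorem \ref{thm1}.
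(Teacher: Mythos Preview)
Your proposal is correct and follows essentially the same approach as the paper: you identify the vertex set as a clique $P$ of size $p^t+q^s-1$ (the elements of order $1$, $p$, or $q$) completely joined to an independent set $Q$ of size $p^tq^s-p^t-q^s+1$ (the elements of order $pq$), write the Laplacian in the corresponding block form, and invoke Theorem~\ref{thm1}. The only difference is that you spell out the adjacency analysis and element counts more carefully than the paper does, which is a virtue rather than a departure in method.
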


\begin{proof}
We now proceed to find the Laplacian Spectrum of group ${\mathbb{Z}_p}^t \times {\mathbb{Z}_q}^s$ and denoted by $L$.\\
The rows and columns of the matrix $L$ have been indexed in the following ways:\\
We start with the zero element $[0]$ of ${\mathbb{Z}_p}^t \times {\mathbb{Z}_q}^s$\\
We then list the all $p^t-1$ elements of order $p$ and $q^s-1$ elements of order $q$ from group ${\mathbb{Z}_p}^t \times {\mathbb{Z}_q}^s$.\\
We then list the remaining elements of ${\mathbb{Z}_p}^t \times {\mathbb{Z}_q}^s$\\
Using the above indexing the matrix of $L$ takes the following form:\\
\[
   L=
  \left( {\begin{array}{cc}
   A & C \\
   C^T & B \\
   
  \end{array} } \right)
\]
where 
\[
   A=
  \left( {\begin{array}{ccccc}
   p^tq^s-1 & -1 & -1 & \ldots & -1\\
   -1 & p^tq^s-1 & -1 & \ldots & -1 \\
   -1 & -1 & p^tq^s-1 & \ldots & -1 \\
   \ldots & \ldots & \ldots &\ldots & \ldots \\
   -1 & -1 & -1 & \ldots & p^tq^s-1\\
  \end{array} } \right)_{p^t+q^s-1 \times p^t+q^s-1}
\]
$C$ is a $p^t+q^s-1 \times p^tq^s-p^t-q^s+1$ matrix whose all entries are -1 and $B=(p^t+q^s-1) I_{p^tq^s-p^t-q^s+1}$\\
By using theorem \ref{thm1}, we get Laplacian Spectrum of $L$ as follows:-\\ 
$0$ with multiplicity $1$, $p^tq^s$ with multiplicity $p^t+q^s-1$ and $p^t+q^s-1$ with multiplicity $p^tq^s-p^t-q^s$ 
\end{proof}

\begin{Theorem}
Prove that Laplacian spectrum of group $D_{p^n}$ are $0$ with multiplicity $1$, $2p^n$ with multiplicity $p+p^n$ and $p+p^n$ with multiplicity $p^n-p-1$ with condition that if $p$ is a odd prime, then $ n\geq 1$ and if $p$ is even prime than $n \geq 2$.
\end{Theorem}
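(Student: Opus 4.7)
The plan is to reduce this statement directly to Theorem~\ref{thm1} by realising the Laplacian of $\Theta(D_{p^n})$ in the block form handled there. The strategy mirrors the two preceding theorems: identify the set $U$ of ``universal'' vertices (those adjacent to every other vertex) and show that the complementary set $N$ is an independent set each of whose members is joined to all of $U$.

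First I would classify the elements of $D_{p^n}=\{f^ir^j\}$ by order. Since $r^j$ has order $p^n/\gcd(p^n,j)$, every non-identity rotation has order $p^k$ for some $1\le k\le n$, with $\phi(p^k)=p^k-p^{k-1}$ rotations of each order; every reflection $fr^j$ has order $2$. Next I would compute the relevant gcds. The identity is trivially adjacent to everything; for a reflection, $\gcd(2,p^k)$ is $1$ when $p$ is odd and $p$ (still prime) when $p=2$, while $\gcd(2,2)=2$, so reflections are universal in both parity cases. Rotations of order $p$ give $\gcd(p,p^k)=p$ and $\gcd(p,2)\in\{1,2\}$, so they are universal too. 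By contrast, two rotations of orders $p^k,p^m$ with $k,m\ge 2$ satisfy $\gcd=p^{\min(k,m)}\ge p^2$, which is composite, so these vertices are pairwise non-adjacent.

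Combining the counts, $U$ consists of $e$, the $p^n$ reflections, and the $p-1$ rotations of order $p$, giving $|U|=p^n+p$; the set $N$ consists of rotations of order $p^k$ with $k\ge 2$, so $|N|=\sum_{k=2}^{n}(p^k-p^{k-1})=p^n-p$. (For $p=2$ the element $r^{2^{n-1}}$ simultaneously plays the role of the unique rotation of order $p$; the counts still come out to $p^n+p$ and $p^n-p$, which is why the hypothesis needs $n\ge 2$ in this case, to guarantee $|N|\ge 1$.) Ordering vertices with $U$ first, each vertex of $U$ has degree $2p^n-1$ and each vertex of $N$ has degree $|U|=p^n+p$; since $U$ is a clique, $N$ is independent, and every $U$--$N$ pair is an edge, the Laplacian takes the form
\[
L=\begin{pmatrix} A & C \\ C^T & B \end{pmatrix},
\]
where $A$ is $(p^n+p)\times(p^n+p)$ with diagonal $2p^n-1$ and off-diagonal $-1$, $C$ is the $(p^n+p)\times(p^n-p)$ all-$(-1)$ block, and $B=(p^n+p)\,I_{p^n-p}$. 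Applying Theorem~\ref{thm1} with the substitution $p\mapsto p^n+p$ and $q\mapsto p^n-p$ (so that $p+q-1=2p^n-1$ matches the diagonal of $A$) immediately yields the claimed spectrum $0^{(1)},\,(2p^n)^{(p^n+p)},\,(p^n+p)^{(p^n-p-1)}$.

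The only real obstacle is the bookkeeping around $p=2$: I must check that the rotation $r^{2^{n-1}}$, which is at once a ``rotation of order $p$'' and an ``element of order $2$'', is counted exactly once in $U$, so that the block sizes $(p^n+p)$ and $(p^n-p)$ needed by Theorem~\ref{thm1} actually hold; once that is verified, no separate argument for the even-prime case is required.
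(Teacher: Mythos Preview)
Your approach is essentially the same as the paper's: you partition the vertex set of $\Theta(D_{p^n})$ into the $p^n+p$ universal vertices (identity, the $p^n$ reflections, and the $p-1$ rotations of order $p$) and the $p^n-p$ remaining rotations, observe that this yields exactly the block form of Theorem~\ref{thm1}, and read off the spectrum. The paper does the same thing but with less justification; your explicit verification that $U$ is a clique, $N$ is independent, and $U\times N$ consists entirely of edges is a welcome addition, and your handling of the $p=2$ case (where the unique rotation of order $2$ is counted once among the $p-1$ rotations of order $p$, not again among the reflections) is correct.
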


\begin{proof}
We now proceed to find the Laplacian Spectrum of group $D_{p^n}$ and denoted by $L$.\\
The rows and columns of the matrix $L$ have been indexed in the following ways:\\
We start with the zero element $[0]$ of $D_{p^n}$\\
We then list the all $p-1$ elements of order $p$ and $p^n$ elements of order $2$ from group $D_{p^n}$.\\
We then list the remaining elements of $D_{p^n}$\\
Using the above indexing the matrix of $L$ takes the following form:\\
\[
   L=
  \left( {\begin{array}{cc}
   A & C \\
   C^T & B \\
   
  \end{array} } \right)
\]
where 
\[
   A=
  \left( {\begin{array}{ccccc}
   2p^n-1 & -1 & -1 & \ldots & -1\\
   -1 & 2p^n-1 & -1 & \ldots & -1 \\
   -1 & -1 & 2p^n-1 & \ldots & -1 \\
   \ldots & \ldots & \ldots &\ldots & \ldots \\
   -1 & -1 & -1 & \ldots & 2p^n-1\\
  \end{array} } \right)_{p+p^n \times p+p^n}
\]
$C$ is a $p+p^n \times p^n-p$ matrix whose all entries are -1 and $B=(p+p^n) I_{p^n-p}$\\
By using theorem \ref{thm1}, we get Laplacian Spectrum of $L$ as follows:-\\ 
$0$ with multiplicity $1$, $2p^n$ with multiplicity $p+p^n$ and $p+p^n$ with multiplicity $p^n-p-1$ 
\end{proof}

\bibliographystyle{hplain}

\end{document}